\DeclareSymbolFont{cyrillic}{T2A}{cmr}{m}{n}
\DeclareMathSymbol{\Sha}{\mathalpha}{cyrillic}{216}
\renewcommand\thefootnote{\textcolor{red}{\arabic{footnote}}} 
\numberwithin{equation}{section}
\newtheorem{dummy}{dummy}[section]
\newtheorem{definition}[dummy]{Definition}
\newtheorem{theorem}[dummy]{Theorem}
\newtheorem{corollary}[dummy]{Corollary}
\newtheorem{lemma}[dummy]{Lemma}
\newtheorem{proposition}[dummy]{Proposition}
\theoremstyle{definition}
\newtheorem{remark}[dummy]{Remark}
\newtheorem{example}[dummy]{Example}
\theoremstyle{plain}
\newtheorem{conjecture}[dummy]{Conjecture}
\newcommand{\A}{\ensuremath{\mathbb{A}}}
\newcommand{\Z}{\ensuremath{\mathbb{Z}}}
\newcommand{\R}{\ensuremath{\mathbb{R}}}
\newcommand{\Q}{\ensuremath{\mathbb{Q}}}
\newcommand{\CC}{\ensuremath{\mathbb{C}}}
\newcommand{\PP}{\ensuremath{\mathbb{P}}}
\newcommand{\LL}{\ensuremath{\mathbb{L}}}
\newcommand{\klat}{\ensuremath{\Lambda_{\operatorname{K3}}}}
\def\Db{\calD^{b}}
\newcommand{\gravk}{\ensuremath{K_0(\operatorname{Var}_k)}}
\newcommand{\gravc}{\ensuremath{K_0(\operatorname{Var}_\CC)}}
\newcommand{\gghs}{\ensuremath{K_0^\oplus(\operatorname{HS}_{\Z,2})}}
\newcommand{\gghsf}{\ensuremath{K_0^\oplus(\operatorname{HS}_{\Z})}}
\def\disc{\operatorname{disc}}
\def\Jac{\operatorname{J}}
\def\Kn{{\ensuremath{\operatorname{K3}^{[n]}}}}
\def\Ktwo{{\ensuremath{\operatorname{K3}^{[2]}}}}
\def\dv{\operatorname{div}}
\DeclareMathOperator{\Aut}{Aut}
\DeclareMathOperator{\rk}{rk}
\DeclareMathOperator{\Hom}{Hom}
\DeclareMathOperator{\End}{End}
\DeclareMathOperator{\id}{id}
\DeclareMathOperator{\SHom}{\mathcal{H\kern -1pt}\textit{om}} 
\DeclareMathOperator{\SEnd}{\mathcal{E\kern -1pt}\textit{nd}} 
\DeclareMathOperator{\SExt}{\mathcal{E\kern -1pt}\textit{xt}} 
\DeclareMathOperator{\NS}{NS}
\def\dar[#1]{\ar@<2pt>[#1]\ar@<-2pt>[#1]}
\newcommand\calD{\mathcal{D}}
\newcommand{\fonction}[5]{\begin{array}{lrcl} 
#1: & #2 & \longrightarrow & #3 \\
    & #4 & \longmapsto & #5 \end{array}}
\newcommand{\functionstar}[4]{
\begin{array}{rcl} #1 &\longrightarrow &#2 \\ #3&\longmapsto &#4 \end{array}
}
\newcommand{\isomorphismstar}[4]{
\begin{array}{rcl} #1 &\overset{\sim}{\longrightarrow} &#2 \\ #3&\longmapsto &#4 \end{array}
}
\title{On L-equivalence for K3 Surfaces and Hyperk\"ahler Manifolds}
\author[R.~Meinsma]{Reinder Meinsma}
\address{
Fakultät für Mathematik und Informatik,
Universität des Saarlandes,
Campus E2.4, 66123 Saarbrücken, Germany
}
\email{meinsma@math.uni-sb.de}
\begin{document}
\begin{abstract}
     This paper explores the relationship between $L$-equivalence and $D$-equivalence for K3 surfaces and hyperk\"ahler manifolds. Building on Efimov's approach using Hodge theory, we prove that very general $L$-equivalent K3 surfaces are $D$-equivalent, leveraging the Derived Torelli Theorem for K3 surfaces. Our main technical contribution is that two distinct lattice structures on an integral, irreducible Hodge structure are related by a rational endomorphism of the Hodge structure.
     We partially extend our results to hyperk\"ahler fourfolds and moduli spaces of sheaves on K3 surfaces.
\end{abstract}
\maketitle
\tableofcontents

\let\thefootnote\relax\footnotetext{The research in this work has been supported by the MIS (MIS/BEJ - F.4545.21) Grant from the FNRS, a mobility grant by the FNRS and an ACR Grant from the Universit\'e Libre de Bruxelles.}

\section{Introduction}
Two varieties $X,Y$ over a field $k$ are $L$-equivalent if there exists an integer $n\in \Z$ such that $$\LL^n\cdot ([X]-[Y]) = 0 \in \gravk.$$ Here, $\gravk$ denotes the Grothendieck ring of algebraic varieties over $k$. That is, $\gravk$ is generated, as an abelian group, by isomorphism classes $[X]$ of schemes of finite type over $k$, subject to the relations of the form $[X] = [X\setminus Z] + [Z]$, where $Z\subset X$ is any closed subset. The multiplication in $\gravk$ is given by the fibre product. The class of the affine line, denoted $\LL\coloneqq [\A^1_k]$, was proved to be a zero-divisor in $\gravk$ by Borisov \cite{Bor17}. This led to the notion of $L$-equivalence by Kuznetsov--Shinder \cite{KS18}. 

Two varieties $X$, $Y$ are called $D$-equivalent if there is a linear, exact equivalence of their bounded derived categories of coherent sheaves $\Db(X)\simeq \Db(Y)$. 

This work was originally motivated by a conjecture of Kuznetsov--Shinder predicting that $D$-equivalence should imply $L$-equivalence for simply connected varieties, c.f. \cite[Conjecture 1.6]{KS18}. This conjecture has since been disproved (see \cite{Mei25}). In light of this, a revised conjectural relationship is proposed in \cite{Mei25}: 

\begin{conjecture}\cite[Conjecture 1.5]{Mei25} \label{conj: L implies D}
    Let $X$ and $Y$ be projective hyperk\"ahler manifolds. If $X$ and $Y$ are $L$-equivalent, they are also $D$-equivalent, i.e.
    \[
    X\sim_\LL Y \implies \Db(X)\simeq \Db(Y).
    \]
\end{conjecture}

In this paper, we prove Conjecture \ref{conj: L implies D} in the case where $X$ and $Y$ are sufficiently general K3 surfaces. 
Throughout this paper, we work over the complex numbers.

\begin{theorem}[See Theorem \ref{thm: L equivalent implies D equivalent in general}] \label{thm: L equivalent implies D equivalent in general intro}
    Let $X$ be a K3 surface with $\rho\neq 18$ and $\End(T(X)_\Q)\simeq \Q$. Let $Y$ be a K3 surface.
    If $Y$ is $L$-equivalent to $X$, then $X$ and $Y$ are $D$-equivalent.
\end{theorem}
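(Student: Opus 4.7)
The plan is to chain together three ingredients: Efimov's Hodge-theoretic result for $L$-equivalent K3 surfaces, the Proposition above, and the Derived Torelli Theorem of Mukai and Orlov. Since $\End(T(X)_\Q) \simeq \Q$ forces $T(X)_\Q$ to be a simple (hence irreducible) rational Hodge structure, $X$ is sufficiently general for Efimov's theorem to apply, so the $L$-equivalence hypothesis produces an isomorphism of integral Hodge structures $T(X) \simeq T(Y)$. Transporting endomorphism rings across this isomorphism gives $\End(T(Y)_\Q) \simeq \Q$ as well, so $T(Y)$ is likewise an irreducible Hodge lattice of K3 type.

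Next I would apply the Proposition with $T_1 = T(X)$ and $T_2 = T(Y)$, producing a rational Hodge automorphism $\phi$ of $T(X)_\Q$ and a Hodge isometry $T(Y) \simeq (T(X))_\phi$. By Schur's lemma $\phi$ must be multiplication by a scalar $\lambda \in \Q^\times$, so the form on $(T(X))_\phi$ is exactly $\lambda \cdot (\cdot,\cdot)_{T(X)}$; matching signatures of the two K3 transcendental lattices pins $\lambda > 0$, and Rosati-invariance is automatic for scalars. It would remain to show $\lambda = 1$: then the Hodge isometry becomes an honest $T(X) \simeq T(Y)$, and Derived Torelli delivers $\Db(X) \simeq \Db(Y)$. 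I would establish $\lambda = 1$ by combining the integrality constraint $\lambda \cdot T(X) \subseteq T(X)^*$ supplied by the Proposition, the evenness of the K3 transcendental lattice $T(Y)$, and the rigidity of primitive embeddings $T \hookrightarrow \klat$ via Nikulin's theory of discriminant forms; together these should force $\lambda$ to act trivially on the discriminant form of $T(X)$ and to preserve evenness, pinning $\lambda = 1$.

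The main obstacle is this final lattice-theoretic step: ruling out all non-trivial positive rational scalings that are a priori compatible with being a K3 transcendental lattice requires a careful discriminant/genus computation. I expect the hypothesis $\rho \neq 18$ to enter precisely here, excluding the exceptional rank-four signature $(2,2)$ case — where the accidental isomorphism $\mathfrak{so}(2,2) \simeq \mathfrak{sl}_2 \oplus \mathfrak{sl}_2$, or simply the additional flexibility of rank-four even lattices, can accommodate non-trivial scalings satisfying all the other constraints and would therefore require a separate, more delicate argument to exclude.
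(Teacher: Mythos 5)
Your first two steps track the paper's argument: Efimov's result gives the Hodge isomorphism $T(X)\simeq T(Y)$, the Proposition produces $\phi$ with $T(Y)$ Hodge isometric to $T(X)_\phi$, and $\End(T(X)_\Q)\simeq\Q$ forces $\phi$ to be a rational scalar $\lambda$. But your final step --- pinning $\lambda=1$ from integrality, evenness, and Nikulin's embedding theory alone --- cannot work, and the paper's own Example \ref{ex:Shioda--Inose counterexample} shows why: for a K3 surface $X$ of Picard rank $\rho\geq 12$ and \emph{any} $n>0$ there exists a K3 surface $Y$ with $T(Y)\simeq T(X)(n)$, i.e.\ with $\lambda=n$. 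Such a $T(Y)$ is even, embeds primitively in $\klat$, and is the transcendental lattice of an honest K3 surface, so it satisfies every constraint you propose to use, yet $\lambda\neq 1$. No amount of discriminant-form rigidity will exclude these scalings, because they genuinely occur among $T$-equivalent K3 surfaces.

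The missing ingredient is a second consequence of $L$-equivalence that you never invoke: $\disc(T(X))=\disc(T(Y))$. The paper extracts this (Lemma \ref{lem:L equivalent same discriminant}) from the group homomorphism $[H]\mapsto |D(H)|$ on $\gghs$, which factors through $\gravc[\LL^{-1}]$, together with the unimodularity of $H^2$ of a K3 surface, which identifies $H^2(X,\Z)/(\NS(X)\oplus T(X))$ with the discriminant group of $T(X)$. Combined with $\disc(T(X)_\lambda)=\lambda^{\rk T(X)}\disc(T(X))$ (Lemma \ref{lem: discriminants of twisted hodge lattices}), this forces $\lambda=\pm 1$ immediately. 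You have also slightly misplaced the role of $\rho\neq 18$: it is not needed to rule out positive scalings (the discriminant does that in every rank), but precisely to rule out $\lambda=-1$ via signatures --- your claim that ``matching signatures pins $\lambda>0$'' is exactly the step that fails when $\rk T(X)=4$, since $(2,2)$ is symmetric under sign reversal. With the discriminant lemma inserted, the rest of your outline closes up and coincides with the paper's proof.
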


Theorem \ref{thm: L equivalent implies D equivalent in general intro} shows that $L$-equivalence implies $D$-equivalence for sufficiently general K3 surfaces of Picard rank $\rho\leq 17$, as well as for all K3 surfaces for which $\rk(T(X)) = 22-\rho$ is an odd prime. Indeed, the assumption that $\End(T(X)_\Q)\simeq \Q$ is satisfied by these K3 surfaces by \cite{SZ20,Ogu01,GV15} (see also Lemma \ref{lem: very general endomorphisms isometries}) and \cite[Remark 3.3.14]{Huy16}. 

Conjecture \ref{conj: L implies D} is still open for special K3 surfaces, as our methods rely on the assumption that $\End(T(X)_\Q)\simeq \Q$. However, the currently known examples of $L$-equivalent K3 surfaces have been constructed without relying on this assumption. Therefore, examples of $L$-equivalent special K3 surfaces are already known.

We also study $L$-equivalence for hyperk\"ahler manifolds of \Kn-type. The proof of Theorem \ref{thm: L equivalent implies D equivalent in general intro} relies on the Derived Torelli Theorem. A higher-dimensional generalisation of the Derived Torelli Theorem for hyperk\"ahler manifolds is currently an open problem. However, for special cases of hyperk\"ahler manifolds of \Kn-type, positive results can be found in the literature.  Using these results, we partially extend Theorem \ref{thm: L equivalent implies D equivalent in general intro} to  hyperk\"ahler manifolds. 
In particular, we study $L$-equivalence for hyperk\"ahler fourfolds of \Ktwo-type with Picard rank 1 using the results from \cite{KK24} (see Theorem \ref{thm: HK fourfolds}), and for moduli spaces of sheaves on K3 surfaces with unimodular N\'eron--Severi lattices using results of \cite{MM24} (see Theorem \ref{thm: unimodular moduli}).

We note that $L$-equivalence does not imply $D$-equivalence in general, even for simply connected varieties. For example, if $X$ and $Y$ are Fano varieties with the property that $[X] = [Y] \in \gravk$, then $X$ and $Y$ are trivially $L$-equivalent but not $D$-equivalent unless they are isomorphic, by \cite[Theorem 2.5]{BO01}. As a simple example of two such varieties, one may take $X = \mathrm{Bl}_p\PP^3$ for any point $p\in \PP^3$, and $Y = \PP^1 \times \PP^2$. In this case, one computes that $[X] = [Y] = \LL^3 + 2\LL^2 + 2\LL + 1$. 

Examples of $L$-equivalent K3 surfaces can be found in \cite{IMOU16,KS18,HL18,KKM20,SZ20}, and examples of $L$-equivalent hyperk\"ahler manifolds were constructed in \cite{Oka21}. 
In all these cases, the $L$-equivalent K3 surfaces and hyperk\"ahler manifolds are also $D$-equivalent. 

\subsection{Proof strategy}

Efimov approached $L$-equivalence using Hodge theory \cite{Efi18}, because $L$-equivalent complex smooth projective varieties $X,Y$ satisfy $[H^\bullet(X,\Z)] = [H^\bullet(Y,\Z)]$ in the Grothendieck group $\gghsf$ of integral, polarisable Hodge structures. 
With this approach, Efimov was able to show that $D$-equivalent abelian varieties need not be $L$-equivalent, and that $L$-equivalence classes of K3 surfaces are finite. 

Finally, Efimov showed that for $L$-equivalent K3 surfaces $X,Y$ which are sufficiently general, there exists an \textit{isomorphism} of integral Hodge structures $T(X)\simeq T(Y)$ \cite[Lemma 3.7]{Efi18}. Here, $T(X)$ denotes the transcendental sublattice of $H^2(X,\Z)$. 

Recall that the Derived Torelli Theorem for K3 surfaces, due to Mukai and Orlov, asserts that two K3 surfaces $X$, $Y$ are $D$-equivalent if and only if there is a Hodge \textit{isometry} $T(X)\simeq T(Y)$ \cite{Muk84,Orl03}. A Hodge isometry is an isomorphism of integral Hodge structures that respects the bilinear forms on $T(X)$ and $T(Y)$ given by the cup-product. 

Not every isomorphism of integral Hodge structures $T(X) \simeq T(Y)$ is a Hodge isometry. In Example \ref{ex:Shioda--Inose counterexample}, we give examples of K3 surfaces $X$, $Y$ with isomorphic Hodge structures $T(X)\simeq T(Y)$, yet no Hodge isometries exist between them. Therefore, \cite[Lemma 3.7]{Efi18} by itself is insufficient to establish that $L$-equivalent K3 surfaces are $D$-equivalent.

We study the relationship between Hodge isomorphisms and Hodge isometries for integral, irreducible Hodge structures with a bilinear form, which we call \textit{Hodge lattices}. Our main technical result is the following:

\begin{proposition}[see Proposition \ref{prop: F H bijection} and Lemma \ref{lem: isomorphic hodge structure means twisted by endomorphism}] \label{prop: main technical contribution intro}
    Let $T_1$ and $T_2$ be two irreducible Hodge lattices of K3 type. If $T_1$ and $T_2$ are isomorphic as Hodge structures, then there exists a rational Hodge automorphism $\phi\colon (T_1)_\Q\simeq (T_1)_\Q$ with the property that $T_2$ is Hodge isometric to $(T_1)_\phi$, which is the Hodge lattice with underlying Hodge structure $T_1$ and lattice structure given by $(x\cdot y)_{(T_1)_\phi} = (\phi(x)\cdot y)_{T_1}$ for all $x,y\in T_1$.
\end{proposition}
We note that for the bilinear form $(\phi(x)\cdot y)_{T_1}$ to be a symmetric bilinear form that takes values in the integers, the rational Hodge endomorphism $\phi$ must satisfy two conditions. Firstly, the form is symmetric if and only if $\phi$ is invariant under the Rosati involution, that is, we have $(\phi(x)\cdot y)_{T_1} = (x\cdot \phi(y))_{T_1}$ for all $x,y\in T_1$. Secondly, the form takes values in the integers if and only if $\phi$ maps $T_1$ into its dual $(T_1)^*$. 

A sufficiently general K3 surface $X$ has very few rational Hodge endomorphisms, that is, we have $\End(T(X)_\Q)\simeq \Q$. Therefore, if $X$ is $L$-equivalent to a K3 surface $Y$, Proposition \ref{prop: main technical contribution intro} combined with \cite[Lemma 3.7]{Efi18} asserts that there is a Hodge isometry $T(Y) \simeq T(X)_q$ for some $q\in \Q$. Therefore, we can compute the discriminant of $T(Y)$ via $$\disc(T(Y)) = \disc(T(X)_q) = q^{\rk(T(X))}\disc(T(X))$$ (see Lemma \ref{lem: discriminants of twisted hodge lattices}). Theorem \ref{thm: L equivalent implies D equivalent in general intro} follows from the above observations combined with the following lemma.
\begin{lemma}[See Lemma \ref{lem:L equivalent same discriminant}] \label{lem: L equivalent same discriminant intro}
    If $X$ and $Y$ are $L$-equivalent K3 surfaces, then we have $\disc(T(X)) = \disc(T(Y))$.
\end{lemma}

\subsection{Structure of the paper}

In Section \ref{sec: lattices and Hodge structures}, we recall the necessary lattice and Hodge theory. The main goal of this section is to explain Definition \ref{def:twist hodge lattice by endomorphism}, which is the definition of a Hodge lattice twisted by a rational Hodge endomorphism. 

In Section \ref{sec: Hodge isomorphisms and Hodge isometries}, we introduce the notion of \emph{T}-equivalence: we say that K3 surfaces $X$ and $Y$ are $T$-equivalent if the Hodge structures $T(X)$ and $T(Y)$ are isomorphic, but not necessarily isometric. We investigate the relationship between $T$-equivalence and $D$-equivalence for K3 surfaces. There are two main results in this section, namely Corollary \ref{cor: main technical contribution} and Proposition \ref{prop: T general then T implies D}. Corollary \ref{cor: main technical contribution} is a restatement of Proposition \ref{prop: main technical contribution intro} above, and Proposition \ref{prop: T general then T implies D} shows that $T$-equivalence implies $D$-equivalence under the assumption that $\disc(T(X)) = \disc(T(Y))$.

In Section \ref{sec: L equivalence and D equivalence}, the main goal is to use Proposition \ref{prop: T general then T implies D} to prove that sufficiently general $L$-equivalent K3 surfaces are $D$-equivalent. This proof consists of verifying the condition of Proposition \ref{prop: T general then T implies D} that $L$-equivalent K3 surfaces $X$ and $Y$ satisfy $\disc(T(X)) = \disc(T(Y))$. We do this by defining a group homomorphism $\gravc \to \Q^\times$, which factorises through $\gravc[L^{-1}]$ and sends the class of a K3 surface $[X]$ to $\disc(T(X))$. 

Finally, in Section \ref{sec: applications and challenges}, we use the techniques developed in this paper to study hyperk\"ahler manifolds of \Kn-type, and we discuss the challenge that our methods face in the higher-dimensional setting: the question of whether Lemma \ref{lem: L equivalent same discriminant intro} generalises to the higher-dimensional setting. 

\subsection*{Acknowledgements}
I would like to thank my PhD advisor Evgeny Shinder for the original idea of using Efimov's work to prove Theorem \ref{thm: L equivalent implies D equivalent in general}, and for many useful conversations and comments on an earlier draft of this paper. I also want to thank Mauro Varesco for his very helpful comments, and Yulieth Prieto-Montañez for pointing out a typo in the first version of this paper.

\section{Lattices and Hodge structures} \label{sec: lattices and Hodge structures}
\subsection{Lattices}
Our main reference for lattice theory is \cite{Nik80}. A lattice is a free, finitely generated abelian group $L$ together with a non-degenerate symmetric bilinear form $b\colon L\times L\to \Z$. For $v,w\in L$, we usually denote $v\cdot w\coloneqq b(v,w)$ and sometimes $(v\cdot w)_L\coloneqq b(v,w)$. A group isomorphism of lattices respecting the bilinear forms is called an \textit{isometry}. The group of isometries of $L$ is denoted $O(L)$. A lattice $L$ is called \textit{even} if $v^2\coloneqq v\cdot v$ is even for all $v\in L$. We assume all our lattices to be even.
We define the \textit{dual lattice} to be 
$L^*\coloneqq \Hom(L,\Z)$. The dual lattice inherits a (usually not integral) symmetric bilinear form from $L$. There is a natural embedding $L^*\hookrightarrow L_\Q\coloneqq L\otimes \Q$ that respects the bilinear forms and whose image is 
\begin{equation}\label{eq:dual lattice in rational}
    \left\{x\in L_\Q\mid x\cdot v\in \Z \text{ for all } v\in L\right\}.
\end{equation}

Since we assume $L$ to be non-degenerate, the natural map $L\hookrightarrow L^*$ is injective.
The \textit{discriminant lattice} of $L$ is the quotient $$A_L\coloneqq L^*/L.$$ 
It has a natural quadratic form $q\colon A_L\to \Q/2\Z$ which it inherits from $L^*.$
The \textit{discriminant} of $L$ is $$\disc(L)\coloneqq |A_L|.$$ If we choose a $\Z$-basis for $L$, and let $M$ be the integral matrix representing $b$ in this basis, then we have $$\disc(L) = |\det(M)|.$$ 

For an integer $n\in \Z$, we write $L(n)$ for the lattice whose underlying group structure is that of $L$, and whose bilinear form is given by $(v\cdot w)_{L(n)} = (v\cdot w)_L.$ Note that we have $$\disc(L(n)) = |n|^{\rk(L)}\disc(L).$$

We say that $L$ is \textit{unimodular} if $\disc(L)=1$. An important example of a unimodular lattice is the hyperbolic plane $U$, which is the lattice of rank 2 whose bilinear form is given by $$\left(\begin{matrix}
    0&1\\1&0
\end{matrix}\right).$$ Another important example is the lattice $E_8$, which is the unique even unimodular positive-definite lattice of rank 8.
 
\subsection{K3 surfaces and hyperk\"ahler manifolds}
Our basic reference on K3 surfaces is \cite{Huy16}. We assume all our K3 surfaces and hyperk\"ahler manifolds to be projective.
Let $X$ be a K3 surface. The integral cohomology group $H^2(X,\Z)$ is a free abelian group of rank 22. The cup-product is a non-degenerate symmetric bilinear form $H^2(X,\Z)\times H^2(X,\Z)\to \Z$ which turns $H^2(X,\Z)$ into a unimodular lattice. There is an isometry $H^2(X,\Z)\simeq \klat,$ where $$\klat\coloneqq U^{\oplus 3}\oplus E_8(-1)^{\oplus 2}$$ is the \textit{K3-lattice}.

The cohomology group $H^2(X,\Z)$ has a natural Hodge structure of weight 2:
$$H^2(X,\CC)\simeq H^{0,2}(X)\oplus H^{1,1}(X)\oplus H^{2,0}(X),$$ and $H^{2,0}(X)$ is generated by an everywhere non-degenerate symplectic form $\sigma\in H^{2,0}(X)$.
The \textit{N\'eron--Severi lattice} of $X$ is the sublattice $$\NS(X)\coloneqq H^{1,1}(X)\cap H^2(X,\Z).$$ The rank of $\NS(X)$ is called the \textit{Picard rank} of $X$, usually denoted $\rho$. The \textit{transcendental lattice} of $X$ is $$T(X)\coloneqq \NS(X)^\perp\subset H^2(X,\Z).$$ Note that we have $H^{2,0}(X)\subset T(X)_\CC$, and in fact $T(X)$ is the smallest primitive sublattice of $H^2(X,\Z)$ with this property. The rank of $T(X)$ is $22-\rho$.

\begin{definition}
\begin{enumerate}
    \item[i)] A lattice with a Hodge structure is called a \textit{Hodge lattice}.
    \item[ii)] A Hodge structure $H$ of weight 2 is said to be of K3-type if $H^{2,0}$ is a $1$-dimensional $\CC$-vector space. 
    \item[iii)] A Hodge structure of K3-type $T$ is irreducible if $T$ admits no proper, primitive sub-Hodge structure of K3-type.
\end{enumerate}
\end{definition}

By the above, the transcendental lattice of a K3 surface is an irreducible Hodge lattice of K3-type.

More generally, if $X$ is a complex projective hyperk\"ahler manifold, the cohomology group $H^2(X,\Z)$ admits a natural integral symmetric bilinear form called the \textit{Beauville--Bogomolov--Fujiki} (BBF) form \cite{Bea83,Bog96,Fuj87}. The BBF form turns $H^2(X,\Z)$ into a non-degenerate integral Hodge lattice of K3-type. Similarly to the case of K3 surfaces, we write $\NS(X)\coloneqq H^{1,1}(X)\cap H^2(X,\Z)$ for the N\'eron--Severi lattice, and we let $T(X) \coloneqq \NS(X)^{\perp}\subset H^2(X,\Z)$ be the transcendental lattice of $X$. The transcendental lattice is an irreducible Hodge lattice of K3-type.

\begin{remark}\label{rem: isomorphism diagram that commutes iff isometry}
    Let $T$ be a Hodge lattice. Then $T^*$ inherits a Hodge structure from $T_\Q$ via the natural inclusion $T^*\subset T_\Q$ (see \eqref{eq:dual lattice in rational}). With this Hodge structure, the natural embedding $T\hookrightarrow T^*$ is a morphism of Hodge structures.

    Suppose $T_1$ and $T_2$ are Hodge lattices, and that $f\colon T_1\simeq T_2$ is an isomorphism of Hodge structures. Then $f$ induces an isomorphism of Hodge structures $f^*\colon T_2^*\simeq T_1^*$. However, the diagram 
    \begin{equation}\label{eq: isomorphism diagram that commutes iff isometry}
        \xymatrix{
            T_1 \ar[r]^{f}_\simeq \ar@{^(->}[d]_{i_{1}} & T_2 \ar@{^(->}[d]^{i_{2}} \\ 
            T_1^* & T_2^*\ar[l]^{f^*}_\simeq
        }
    \end{equation}
    is not necessarily commutative. In fact, \eqref{eq: isomorphism diagram that commutes iff isometry} commutes if and only if $f$ is an isometry.
\end{remark}

\subsection{Irreducible Hodge lattices}
Let $T$ be an integral irreducible Hodge lattice of K3-type. Consider the Hodge endomorphism algebra $E\coloneqq \End(T_\Q)$. Elements of $E$ will be called \textit{rational Hodge endomorphisms}. Note that an isomorphism of integral Hodge structures $\phi \colon T\simeq T$ induces a rational Hodge endomorphism $\phi_\Q\in E$. The converse does not hold: a rational Hodge endomorphism does not necessarily preserve $T\subset T_\Q$.

Since $T$ is irreducible, it follows that $E$ is a number field. Moreover, it is well-known that $E$ is either totally real or a CM field, see for example \cite[Theorem 3.3.7]{Huy16} for details. Recall that a totally real number field is a number field $k$ for which all embeddings $k\hookrightarrow\CC$ have images contained in $\R\subset \CC,$ and a CM field is a number field of the form $k(\sqrt{a})$, where $k$ is totally real and $\iota(a)<0$ for all embeddings $\iota\colon k\hookrightarrow \R$.

For a Hodge endomorphism $\phi\in E$, we define $\overline{\phi}\in E$ to be the unique Hodge endomorphism with the property that $$\phi(x)\cdot y = x\cdot \overline{\phi}(y)$$ for all $x,y\in T_\Q$. The group homomorphism $$\functionstar{E}{E}{\phi}{\overline{\phi}}$$ is known as the \textit{Rosati involution}, and we consider the invariant subfield $$K(T)\coloneqq \left\{\phi\in E\mid \phi = \overline{\phi}\right\}.$$
We note that $E = K(T)$ if $E$ is totally real, and $E = K(T)(\sqrt{a})$ for some $a\in K(T)$ otherwise.

\begin{definition}\label{def:twist hodge lattice by endomorphism}
    Let $T$ be an integral irreducible Hodge lattice of K3-type, and let $\phi\in K(T)$ be a rational Hodge endomorphism which maps $T$ into $T^*$. We write $T_\phi$ for the integral irreducible Hodge lattice of K3-type whose underlying Hodge structure is equal to that of $T$ and whose lattice structure is given by 
    \begin{equation}\label{eq:hodgelattice twisted by endomorphism}
    (x\cdot y)_{T_\phi} = (\phi(x)\cdot y)_T
    \end{equation}
    for all $x,y\in T$.
\end{definition}

\begin{remark}\label{rem: basics of twisting by endomorphism}
    \begin{itemize}
        \item[i)] Note that \eqref{eq:hodgelattice twisted by endomorphism} is a symmetric bilinear form by our assumption that $\phi\in K(T)$, since we have $$(x\cdot y)_{T_\phi} = (\phi(x)\cdot y)_T =(x\cdot\phi(y))_T = (y\cdot x)_{T_{\phi}}$$ for all $x,y\in T$.
        \item[ii)] By \eqref{eq:dual lattice in rational}, the assumption that $\phi$ maps $T$ into $T^*$ is equivalent to \eqref{eq:hodgelattice twisted by endomorphism} taking values in the integers. 
    \end{itemize}
\end{remark}

Recall that for a number field $k$ and an element $a\in k$, the \textit{norm} of $a$ is defined to be $$N(a)\coloneqq \prod_{\iota} \iota(a),$$ where the product runs over all embeddings $\iota\colon k\hookrightarrow \CC$.

\begin{lemma} \label{lem: discriminants of twisted hodge lattices} \cite[Lemma 4.5(2)]{Gee06}
    Let $T$ be an integral irreducible Hodge lattice of K3-type. Let $E\coloneqq \End(T_\Q)$ be the rational endomorphism field of $T$, and let $\phi\in K(T)$ be a rational Hodge endomorphism which maps $T$ into $T^*$. Then we have $$\disc(T_\phi) = N(\phi)^m\disc(T),$$ where $m \coloneqq \dim_E(T_\Q).$
\end{lemma}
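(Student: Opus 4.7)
The plan is to argue by a direct Gram-matrix computation and then to identify the $\Q$-determinant of $\phi$ acting on $T_\Q$ with $N_{E/\Q}(\phi)^m$ via the classical determinant-norm relation.

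First, I would fix a $\Z$-basis $e_1,\ldots,e_r$ of $T$ and let $G$ denote its Gram matrix, so that $\disc(T)=|\det G|$. Let $\Phi$ be the matrix of $\phi$ in this basis, i.e.\ $\phi(e_i)=\sum_k \Phi_{ki}e_k$; the hypothesis $\phi(T)\subseteq T^*$ ensures, via Remark \ref{rem: basics of twisting by endomorphism}, that the Gram matrix $G_\phi$ of $T_\phi$ in the same basis is integral. Using the definition \eqref{eq:hodgelattice twisted by endomorphism},
\[
    (G_\phi)_{ij} = (\phi(e_i)\cdot e_j)_T = \sum_k \Phi_{ki}\, G_{kj} = (\Phi^t G)_{ij},
\]
so $\det G_\phi = \det(\Phi)\cdot\det G$.

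Next, I would identify $\det(\Phi)$ with $N(\phi)^m$. Since $T_\Q$ is a free $E$-module of rank $m$, any $E$-basis yields an isomorphism $T_\Q \simeq E^{\oplus m}$ of $E$-modules, under which $\phi$ acts blockwise as the $\Q$-linear endomorphism of $E$ given by multiplication by $\phi$, repeated $m$ times. By the very definition of the field-theoretic norm, this latter map on $E$ has $\Q$-determinant $N_{E/\Q}(\phi)$, and hence $\det(\Phi)=\det_\Q(\phi)=N(\phi)^m$. Combined with the previous step this yields $\disc(T_\phi) = N(\phi)^m\disc(T)$, up to the sign convention adopted for $\disc$.

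The argument presents no real obstacle: the Gram-matrix manipulation in the first step is routine bookkeeping, and the norm-determinant identity in the second step is a standard consequence of the definition of the norm as the determinant of the regular representation. The only point to verify is the compatibility of using a $\Z$-basis of $T$ (so that $|\det G|=\disc(T)$) with the basis-independence of $\det_\Q(\phi)$, which allows one to switch to an $E$-basis for the norm calculation.
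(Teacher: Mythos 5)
Your argument is correct and complete: the identity $G_\phi=\Phi^t G$ together with the fact that the $\Q$-determinant of $\phi$ on the $m$-dimensional $E$-vector space $T_\Q$ equals $N_{E/\Q}(\phi)^m$ gives exactly the claimed formula, and your remark about the sign is the right caveat given that the paper defines $\disc$ via an absolute value. The paper itself offers no proof --- it simply cites \cite[Lemma 4.5(2)]{Gee06} --- and your computation is the standard argument underlying that reference, so there is nothing to correct.
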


\section{Hodge isomorphisms and Hodge isometries}
\label{sec: Hodge isomorphisms and Hodge isometries}
For a Hodge lattice $T$, we denote the group of Hodge isometries of $T$ by $O_{\operatorname{Hodge}}(T)$. Similarly, we write $O_\mathrm{Hodge}(T_\Q)$ for the group of rational Hodge automorphisms which respect the natural bilinear form on $T_\Q$, which we call \textit{rational Hodge isometries}. 

\begin{lemma} \label{lem: very general endomorphisms isometries}
    Let $X$ be a K3 surface. Consider the four statements:
    \begin{enumerate}
        \item[i)] $\End(T(X)) \simeq \Z$,
        \item[ii)] $\End(T(X)_\Q)\simeq \Q$, 
        \item[iii)] $O_{\operatorname{Hodge}}(T(X)_\Q)\simeq \Z/2\Z$, 
        \item[iv)] $O_{\operatorname{Hodge}}(T(X))\simeq \Z/2\Z$. 
    \end{enumerate}
    Then we have $$i)\iff ii) \implies iii)\implies iv).$$ Moreover, if $X$ has Picard rank $\rho\leq 19$ and $X$ is very general in a moduli space of lattice-polarised K3 surfaces, then $X$ satisfies each of these statements.
\end{lemma}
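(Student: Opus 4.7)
The plan is to establish the chain of implications one at a time using essentially formal arguments, and then to invoke known genericity results from the literature for the moreover statement.

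For $i) \iff ii)$, I would first note that $\End(T(X))$ is a subring of the finitely generated free $\Z$-module $\End_\Z(T(X))$, hence is itself a finitely generated $\Z$-module. The direction $i) \implies ii)$ then follows by tensoring with $\Q$. Conversely, if $\End(T(X)_\Q) \simeq \Q$, then $\End(T(X))$ embeds as a unital subring of $\Q$; any subring of $\Q$ that is finitely generated as a $\Z$-module must equal $\Z$, since $\Z$ is integrally closed in $\Q$.

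For $ii) \implies iii)$, a rational Hodge isometry is in particular a rational Hodge endomorphism, hence lies in $\End(T(X)_\Q) \simeq \Q$ and equals $\lambda \cdot \id$ for some $\lambda \in \Q$. The isometry condition $\lambda^2 (x \cdot y) = (x \cdot y)$ for all $x,y$ forces $\lambda = \pm 1$, so $O_{\operatorname{Hodge}}(T(X)_\Q) = \{\pm \id\} \simeq \Z/2\Z$. The implication $iii) \implies iv)$ is then immediate from the inclusion $O_{\operatorname{Hodge}}(T(X)) \hookrightarrow O_{\operatorname{Hodge}}(T(X)_\Q)$, combined with the fact that $\pm \id$ are always integral Hodge isometries.

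For the moreover clause, the moduli space of lattice-polarised K3 surfaces of Picard rank $\rho \leq 19$ has positive dimension $20 - \rho$, and the locus where $\End(T(X)_\Q) \neq \Q$ is a countable union of proper analytic subvarieties of the period domain. The plan is simply to quote the results of \cite{SZ20, Ogu01, GV15}, which precisely assert that a very general lattice-polarised K3 surface in this range satisfies $\End(T(X)_\Q) \simeq \Q$; the other three conditions then follow automatically from the chain of implications above. There is no real obstacle in this lemma: the implications are formal, and the main technical content lies entirely in the cited density statements, the only minor subtlety being the finite generation of $\End(T(X))$ needed in the converse of step one.
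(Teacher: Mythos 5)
Your proposal is correct and follows essentially the same route as the paper: tensoring with $\Q$ for $i)\iff ii)$, identifying a rational Hodge isometry with a scalar $\lambda\in\Q$ forced to be $\pm 1$ for $ii)\implies iii)$, restricting to integral isometries for $iii)\implies iv)$, and citing \cite{SZ20,Ogu01,GV15} for the very-general statement. The only difference is that you spell out the $ii)\implies i)$ direction (finite generation of $\End(T(X))$ plus integral closedness of $\Z$ in $\Q$) in more detail than the paper, which simply appeals to $\End(T(X))\otimes\Q\simeq\End(T(X)_\Q)$; this is a welcome elaboration, not a divergence.
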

\begin{proof}
    The equivalence of $i)$ and $ii)$ follows from the observation that $\End(T(X))\otimes \Q\simeq \End(T(X)_\Q)$. 
    
    $ii)\implies iii)$:
    If we assume $\End(T(X)_\Q)\simeq \Q$, then $K(T(X)) = \End(T(X))$. Therefore, for any rational endomorphism $\phi\in \End(T(X)_\Q)$, we have $\phi(x)\cdot \phi(y) = x\cdot \phi^2(y)$. If $\phi$ is an isometry, this means that we have $\phi^2(y) = y$ for all $y\in T(X)$, therefore $\phi^2 = \id_{T(X)_\Q}$. As $\End(T(X)_\Q)\simeq \Q$, this implies $\phi = \pm \id_{T(X)_\Q}$, hence $O_{\operatorname{Hodge}}(T(X)_\Q)\simeq \Z/2\Z$. 

    $iii)\implies iv)$:    
    For any integral Hodge isometry $\phi\in O_{\operatorname{Hodge}}(T(X))$, the rational Hodge endomorphism $\phi_\Q$ is a rational Hodge isometry. Moreover, for two integral Hodge isometries $\phi, \psi\in O_\mathrm{Hodge}(T(X))$, we have $\phi_\Q = \psi_\Q$ if and only if $\phi = \psi$. Thus, we have an injective group homomorphism 
    \begin{equation} \label{eq: injective rationalisation homomorphism}
        O_\mathrm{Hodge}(T(X)) \hookrightarrow O_\mathrm{Hodge}(T(X)_\Q)
    \end{equation}
    Since $O_\mathrm{Hodge}(T(X)_\Q) \simeq \Z/2\Z$, it follows that $O_\mathrm{Hodge}(T(X))$ is either trivial, or \eqref{eq: injective rationalisation homomorphism} is an isomorphism. Since $\pm \id_{T(X)} \in O_\mathrm{Hodge}(T(X))$ are two distinct integral Hodge isometries, \eqref{eq: injective rationalisation homomorphism} must be an isomorphism, as required.

    The final claim is well-known, see \cite[Lemma 3.9]{SZ20} and \cite[Lemma 4.1]{Ogu01} for a proof that a very general K3 surface satisfies iii) and iv), and \cite[Lemma 9]{GV15} for i) and ii).
\end{proof}

\begin{definition}
    \begin{enumerate}
        \item Two hyperk\"ahler manifolds $X, Y$, of the same dimension, are \textit{$T$-equivalent} if there is an isomorphism of Hodge structures $T(X)\simeq T(Y)$. 
        \item Two hyperk\"ahler manifolds $X$ and $Y$ are \textit{$D$-equivalent} if there is an equivalence $\Db(X)\simeq \Db(Y)$.
    \end{enumerate}
\end{definition}

For hyperk\"ahler manifolds of \Kn-type, an equivalence $\Db(X)\simeq \Db(Y)$ induces a Hodge isometry $T(X)\simeq T(Y)$ \cite[Corollary 9.3]{Bec22}. In particular, for these, $D$-equivalence implies $T$-equivalence. 
In this section, we investigate the converse of this statement.

\begin{example}\label{ex:Shioda--Inose counterexample}
The following two examples show that $T$-equivalence does not imply $D$-equivalence.
\begin{itemize}
    \item[i)]Let $X$ be a K3 surface admitting a Shioda--Inose structure \cite{Mor84}. Then there is a K3 surface $Y$ for which there is a Hodge isometry $T(X)\simeq T(Y)(2)$. This Hodge isometry gives an isomorphism of Hodge structures $T(X)\simeq T(Y)$. However, we have $$\operatorname{disc}(T(X)) = \operatorname{disc}(T(Y)(2)) \neq \operatorname{disc}(T(Y)),$$ hence there is no Hodge isometry between $T(X)$ and $T(Y)$.

    \item[ii)] Let $X$ be a K3 surface with Picard rank $\rho\geq 12$ and let $n>0$ be any positive integer. Then there exists a K3 surface $Y$ with $T(Y)\simeq T(X)(n)$. Indeed, since $\rk(T(X))\leq 10$, there exists a primitive embedding into the K3-lattice $T(X)(n)\hookrightarrow \klat$ by \cite[Corollary 1.12.3]{Nik80}. By the surjectivity of the period map \cite{Tod80}, there exists a K3 surface $Y$ for which there exists a Hodge isometry $T(Y)\simeq T(X)(n)$.
\end{itemize}
\end{example}

Example \ref{ex:Shioda--Inose counterexample} ii) shows that $T$-equivalence classes are not necessarily finite. In our counterexamples, the discriminants of $T$-equivalent K3 surfaces can get arbitrarily large. Once we fix the discriminant, however, we have the following result of Efimov.

\begin{proposition}\cite[Proposition 3.3]{Efi18}
    Let $X$ be a K3 surface, and let $d\in \Z$ be an integer. Then the set of isomorphism classes of K3 surfaces $Y$, that are $T$-equivalent to $X$ and satisfy $\disc(T(Y))=d$, is finite.
\end{proposition}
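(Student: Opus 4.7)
The plan is to use the paper's main Proposition to reduce the problem to a Diophantine finiteness statement over a number field, and then to lift this to finiteness of K3 surfaces via Torelli. Let $Y$ be a K3 surface $T$-equivalent to $X$ with $\disc(T(Y)) = d$. By Proposition \ref{prop: F H bijection}, there exists $\phi \in K(T(X))$ with $\phi(T(X)) \subset T(X)^*$ such that $T(Y)$ is Hodge-isometric to $T(X)_\phi$. By Lemma \ref{lem: discriminants of twisted hodge lattices}, the constraint $\disc(T(X)_\phi) = d$ forces $N(\phi)^m = d/\disc(T(X))$ with $m = \dim_E T(X)_\Q$, so $|N(\phi)|$ is determined. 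It thus suffices to prove: (i) only finitely many such $\phi$ yield pairwise non-isometric twists $T(X)_\phi$, and (ii) each such Hodge lattice arises from only finitely many K3 surfaces $Y$ up to isomorphism.

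For (i), the set $\Lambda \coloneqq \{\phi \in K(T(X)) : \phi(T(X)) \subset T(X)^*\}$ is a finitely generated $\Z$-submodule of the number field $K(T(X))$. A direct computation using the Rosati involution shows that $\phi_1, \phi_2 \in \Lambda$ give Hodge-isometric twists if and only if $\phi_1 \phi_2^{-1}$ lies in the subgroup $U \subset K(T(X))^\times$ generated by the Rosati norms $\bar\psi \psi$ of integral Hodge automorphisms $\psi$ of $T(X)$. This $U$ has finite index in the unit group of the order $K(T(X)) \cap \End(T(X))$, so by the classical finiteness of lattice elements of bounded norm in a number field modulo any such unit group (a consequence of Dirichlet's unit theorem together with the geometry of numbers), step (i) follows. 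For (ii), the Global Torelli Theorem combined with the surjectivity of the period map identifies K3 surfaces $Y$ with $T(Y) \simeq T$ as Hodge lattices with primitive embeddings $T \hookrightarrow \klat$ realising the specified Hodge structure, modulo the action of $O(\klat)$; this set is finite by Nikulin's theorem on primitive embeddings of even lattices, since $\rk T \leq 22$.

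The main obstacle is step (i): specifically, pinning down the equivalence relation on $\phi$ corresponding to Hodge isometry of twists and matching it to a subgroup of finite index in the appropriate unit group of $K(T(X))$. This is delicate when $E = \End(T(X)_\Q)$ is CM and strictly contains $K(T(X))$, because the Rosati norms $\bar\psi \psi$ of integral Hodge automorphisms of $T(X)$ need not exhaust all units of the natural order inside $K(T(X))$. Nevertheless, the finite-index property is all that is needed for the number-theoretic finiteness, and all remaining steps are standard applications of the main Proposition, Nikulin's results on even lattices, and the Torelli theorem for K3 surfaces.
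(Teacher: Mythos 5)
The paper offers no proof of this statement --- it is quoted directly from \cite[Proposition 3.3]{Efi18} --- so there is no internal argument to compare against; your proof is, however, essentially Efimov's original one, and it is correct in outline. The reduction via Proposition \ref{prop: F H bijection} and Lemma \ref{lem: discriminants of twisted hodge lattices} is valid (note that the corollary following Proposition \ref{prop: F H bijection} gives you exactly the equivalence you need: $T_{\phi_1}$ and $T_{\phi_2}$ are Hodge isometric iff $\phi_1\phi_2^{-1}=\bar\psi\psi$ for some $\psi\in\Aut(T(X))$, and the set of such Rosati norms is already a group since $E$ is a commutative field). The point you flag as delicate in step (i) does go through in both the totally real and the CM case: every unit $u$ of the order $\mathcal{O}'\coloneqq \End(T(X))\cap K(T(X))$ lies in $\Aut(T(X))$ and satisfies $\bar u u=u^2$, so $U$ contains $((\mathcal{O}')^\times)^2$, which has finite index in $(\mathcal{O}')^\times$ by Dirichlet; since $\Lambda$ contains $1$ and is an $\mathcal{O}'$-module, it is a full lattice in $K(T(X))$ stable under $U$, and the classical finiteness of lattice elements of prescribed norm modulo a finite-index unit subgroup applies. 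Step (ii) is the standard finiteness of K3 surfaces with a prescribed transcendental Hodge lattice (finiteness of primitive embeddings into $\klat$ up to $O(\klat)$ plus Torelli and surjectivity of the period map), as in the counting of Fourier--Mukai partners.
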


\begin{definition} \label{def: F H}
Let $T$ be an integral irreducible Hodge lattice of K3-type.
\begin{enumerate}
    \item[i)] We denote by $$F(T)\subset K(T)^\times$$ the set of rational Hodge automorphisms $\phi\colon T_\Q\simeq T_\Q$ that are contained in $K(T)^\times$ and that map $T$ into $T^*$. In other words, by Remark \ref{rem: basics of twisting by endomorphism}, $F(T)$ is the set of rational Hodge automorphisms $\phi$ for which $T_\phi$ is well-defined (see Definition \ref{def:twist hodge lattice by endomorphism}).
    \item[ii)] Let $T'$ and $T''$ be integral irreducible Hodge lattices of K3-type, and suppose that $f\colon T\simeq T'$, $g\colon T\simeq T''$ are two isomorphisms of Hodge structures. Then we say that $f$ and $g$ are \textit{equivalent}, denoted $f\sim g$, if the composition $g\circ f^{-1}\colon T'\simeq T''$ is a Hodge isometry. It is easy to check that $\sim$ is an equivalence relation.
    \item[iii)] We denote $$H(T)\coloneqq \left(\bigcup_{T'}\operatorname{Isom}(T,T')\right)/\sim,$$ where the union runs over all integral, irreducible Hodge lattices of K3-type, and the set $\operatorname{Isom}(T,T')$ denotes the set of isomorphisms of Hodge structures $T\simeq T'$.
    \item[iv)] We write $I(T)$ for the set of irreducible Hodge lattices $T'$, that are isomorphic to $T$ as Hodge structures, considered up to Hodge isometries.
\end{enumerate} 
\end{definition}

\begin{remark}\label{rem: F H actions}
    Keeping the notation of Definition \ref{def: F H}, the group of Hodge automorphisms $\Aut(T)$ acts naturally on $F(T)$ and $H(T)$ as follows:
    \begin{itemize}
        \item[i)] For $h\in \Aut(T)$ and $\phi\in F(T)$, we define $h*\phi\coloneqq h^*\circ \phi\circ g$.
        \item[ii)] For $h\in \Aut(T)$ and $f\in H(T)$, we define $h*f\coloneqq f\circ h$. Note that if $f\colon T\simeq T'$ and $g\colon T\simeq T''$ are equivalent, then $h*f$ and $h*g$ are also equivalent. Indeed, $$h*g\circ (h*f)^{-1} = g\circ h\circ h^{-1}\circ f^{-1} = g\circ f^{-1}$$ is a Hodge isometry by assumption.
    \end{itemize}
\end{remark}
\begin{remark}\label{rem: quotient}
    For an integral, irreducible Hodge lattice $T$ of K3-type, it is easy to see that the map 
    $$\functionstar{H(T)}{I(T)}{\left(f\colon T\simeq T'\right)}{T'}$$ is $\Aut(T)$-invariant and induces a bijection $H(T)/\Aut(T)\simeq I(T).$
\end{remark}

The main technical result of this section is the following.

\begin{proposition} \label{prop: F H bijection}
    Let $T$ be an integral, irreducible Hodge lattice of K3-type. Then we have a bijection
    \begin{equation}\label{eq: F H bijection}
        \isomorphismstar{F(T)}{H(T)}{\phi}{\left(\id_{T}\colon T\simeq T_\phi\right).}
    \end{equation}
    Moreover, \eqref{eq: F H bijection} is equivariant for the $\Aut(T)$-actions on $F(T)$ and $H(T)$ of Remark \ref{rem: F H actions}.
\end{proposition}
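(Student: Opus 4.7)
The plan is to verify well-definedness, injectivity, surjectivity, and $\Aut(T)$-equivariance in turn, with surjectivity carrying the technical content.

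Well-definedness is immediate from Definition \ref{def:twist hodge lattice by endomorphism}: the identity $T \to T_\phi$ is a Hodge isomorphism because the underlying Hodge structures coincide. For injectivity, if $\phi_1, \phi_2 \in F(T)$ give equivalent isomorphisms, then $\id_T \colon T_{\phi_1} \to T_{\phi_2}$ is forced to be a Hodge isometry, which unfolds to $((\phi_1 - \phi_2)(x) \cdot y)_T = 0$ for all $x, y \in T$, and non-degeneracy of the form forces $\phi_1 = \phi_2$.

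For surjectivity, given a Hodge isomorphism $f \colon T \simeq T'$ I would define $\phi \in \End(T_\Q)$ by the relation
$$(\phi(x) \cdot y)_T = (f(x) \cdot f(y))_{T'} \quad \text{for all } x, y \in T_\Q,$$
which makes sense by non-degeneracy of the form on $T$. Symmetry of both bilinear forms gives the Rosati identity $\phi = \overline{\phi}$, so $\phi \in K(T)$; integrality of the form on $T'$ together with the description \eqref{eq:dual lattice in rational} of $T^*$ yields $\phi(T) \subset T^*$; and invertibility of $\phi$ is inherited from $f$. Granted that $\phi$ is a Hodge endomorphism, we have $\phi \in F(T)$, and the computation $(x \cdot y)_{T_\phi} = (\phi(x) \cdot y)_T = (f(x) \cdot f(y))_{T'}$ shows that $f \colon T_\phi \to T'$ is a Hodge isometry, which is precisely the statement that $(\id_T \colon T \simeq T_\phi) \sim (f \colon T \simeq T')$ in $H(T)$.

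The main obstacle is verifying that $\phi$ is a Hodge endomorphism. I would argue this by writing $b_T \colon T_\Q \simeq T_\Q^*$ for the isomorphism of Hodge structures induced by the non-degenerate pairing on $T$, and analogously $b_{T'}$; the defining relation for $\phi$ then rearranges to $\phi = b_T^{-1} \circ f^* \circ b_{T'} \circ f$, and since $b_T$, $b_{T'}$, $f$, and hence $f^*$ are all morphisms of Hodge structures, so is $\phi$. Finally, the $\Aut(T)$-equivariance is a direct unwinding of Remark \ref{rem: F H actions}: using the defining property $h^*(z) \cdot y = z \cdot h(y)$ of the Rosati adjoint, the identity $((h^* \circ \phi \circ h)(x) \cdot y)_T = (\phi(h(x)) \cdot h(y))_T$ shows that $h \colon T_{h * \phi} \simeq T_\phi$ is a Hodge isometry, so $(\id_T \colon T \simeq T_{h * \phi}) \sim (h \colon T \simeq T_\phi)$, which equals $h * (\id_T \colon T \simeq T_\phi)$ by definition.
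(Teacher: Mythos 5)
Your proposal is correct and follows essentially the same route as the paper: your $\phi = b_T^{-1}\circ f^*\circ b_{T'}\circ f$ is exactly the paper's composition $i_{T,\Q}^{-1}\circ f^*_\Q\circ i_{T',\Q}\circ f_\Q$ from Lemma \ref{lem: isomorphic hodge structure means twisted by endomorphism}, and your injectivity and equivariance checks are the same computations the paper expresses via commutative diagrams. The only cosmetic difference is that you define $\phi$ by the pairing identity and then identify it with the composition, while the paper does the reverse.
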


Before we proceed to the proof of Proposition \ref{prop: F H bijection}, we need one further technical result, which essentially shows how to construct the inverse to \eqref{eq: F H bijection}.

\begin{lemma}\label{lem: isomorphic hodge structure means twisted by endomorphism}
    Let $T_1$ and $T_2$ be two integral irreducible Hodge lattices of K3-type that are isomorphic as Hodge structures. Let $f\colon T_1 \simeq T_2$ be an isomorphism of Hodge structures, and denote $$\phi\coloneqq i_{1,\Q}^{-1}\circ f^*_\Q \circ i_{2,\Q}\circ f_\Q \in \End((T_1)_\Q),$$ where $i_1\colon T_1\hookrightarrow T_1^*$ and $i_2\colon T_2\hookrightarrow T_2^*$ are the natural inclusions. Then $\phi$ is a rational Hodge endomorphism contained in $K(T_1)$, which maps $T_1$ into $T_1^*$. Moreover, $T_2$ is Hodge isometric to the Hodge lattice $(T_1)_\phi$ of Definition \ref{def:twist hodge lattice by endomorphism}.
\end{lemma}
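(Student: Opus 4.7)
The approach is to reduce everything to the single key identity
$$(\phi(x) \cdot y)_{T_1} = (f_\Q(x) \cdot f_\Q(y))_{T_2}, \qquad x, y \in (T_1)_\Q,$$
from which all three assertions of the lemma follow immediately.

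To establish this identity, I would first note that $\phi$ is a rational Hodge endomorphism as a composition of morphisms of rational Hodge structures: the isomorphism $f_\Q$, the natural inclusions $i_{1,\Q}$ and $i_{2,\Q}$ (which are Hodge morphisms by Remark \ref{rem: isomorphism diagram that commutes iff isometry} and become isomorphisms after tensoring with $\Q$, since $L \hookrightarrow L^*$ has finite cokernel for any non-degenerate lattice), the induced dual map $f^*_\Q$, and the inverse $i_{1,\Q}^{-1}$. The key identity is then obtained by unwinding definitions: under the embedding \eqref{eq:dual lattice in rational} one has $i_{i,\Q}(a)(b) = (a \cdot b)_{T_i}$, and by construction $f^*_\Q(m) = m \circ f_\Q$; chaining these four identifications yields the claim.

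The three assertions now fall out quickly. The right-hand side is symmetric in $x$ and $y$, which forces $(\phi(x) \cdot y)_{T_1} = (x \cdot \phi(y))_{T_1}$, hence $\phi = \overline{\phi}$ and $\phi \in K(T_1)$. For $x \in T_1$, the right-hand side lies in $\Z$ for every $y \in T_1$, so by the characterisation \eqref{eq:dual lattice in rational} we have $\phi(T_1) \subset T_1^*$; in particular $(T_1)_\phi$ is well-defined in the sense of Definition \ref{def:twist hodge lattice by endomorphism}. Finally, the identity rewrites as $(x \cdot y)_{(T_1)_\phi} = (f(x) \cdot f(y))_{T_2}$, which says precisely that $f$, viewed as a map between the Hodge lattices $(T_1)_\phi$ and $T_2$ (whose underlying Hodge structures coincide with those of $T_1$ and $T_2$), is a Hodge isometry.

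I do not expect a genuine obstacle here; the argument is essentially careful bookkeeping of the identification $L^* \hookrightarrow L_\Q$ and of how the dual map interacts with the bilinear form. The only point that deserves attention is the verification that the inclusions $i_{i,\Q}$ are morphisms of Hodge structures (so that $\phi$ indeed lies in $\End((T_1)_\Q)$ and not merely in the endomorphism ring of the underlying $\Q$-vector space), but this is exactly the content of Remark \ref{rem: isomorphism diagram that commutes iff isometry}.
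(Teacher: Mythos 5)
Your proposal is correct and follows essentially the same route as the paper: both hinge on the identity $(\phi(x)\cdot y)_{T_1}=(f(x)\cdot f(y))_{T_2}$, deduce $\phi\in K(T_1)$ from its symmetry and $\phi(T_1)\subset T_1^*$ from its integrality, and conclude the Hodge isometry $(T_1)_\phi\simeq T_2$ (the paper phrases this last step via the commutativity of the dual-lattice diagram of Remark \ref{rem: isomorphism diagram that commutes iff isometry}, which is the same observation as your direct rewriting).
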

\begin{proof}
    As discussed in Remark \ref{rem: isomorphism diagram that commutes iff isometry}, the corresponding diagram \begin{equation*}
        \xymatrix{
            T_1 \ar[r]^{f}_\simeq \ar@{^(->}[d]_{i_{1}} & T_2 \ar@{^(->}[d]^{i_{2}} \\ 
            T_1^* & T_2^*\ar[l]^{f^*}_\simeq
        }
    \end{equation*}
    is not necessarily commutative. 
    However, for $\phi\coloneqq i_{1,\Q}^{-1}\circ f^*_\Q \circ i_{2,\Q}\circ f_\Q,$ we have 
    \begin{equation}\label{eq: lattice equality}
        (\phi(x),-)_{T_1} = (f(x),f(-))_{T_2}
    \end{equation}
    for all $x\in (T_1)_\Q$. 
    By symmetry, we have $(x,\phi(-))_{T_1} = (\phi(x),-)_{T_1},$ hence $\phi\in K(T_1)$. Finally, since $(\phi(x),y)_{T_1} = (f(x),f(y))_{T_2}\in \Z$  for all $x,y\in T_1$, it follows that $\phi$ maps $T_1$ into $T_1^*$ by Remark \ref{rem: basics of twisting by endomorphism} ii).
    By construction, the diagram 
    \begin{equation*}
        \xymatrix{
            T_1 \ar[r]^{f}_\simeq \ar@{^(->}[d]_{\phi|_{T_1}} & T_2 \ar@{^(->}[d]^{i_{2}} \\ 
            T_1^* & T_2^*\ar[l]^{f^*}_\simeq
        }
    \end{equation*}
    commutes.
    Therefore, by Remark \ref{rem: isomorphism diagram that commutes iff isometry}, $T_2$ is Hodge isometric to $(T_1)_\phi$. 
\end{proof}

\begin{proof}[Proof of Proposition \ref{prop: F H bijection}]
    We first check the surjectivity of \eqref{eq: F H bijection}. Let $f\colon T\simeq T'$ be an isomorphism of Hodge structures, where $T'$ is some integral, irreducible Hodge lattice of K3-type. It follows from the proof of Lemma \ref{lem: isomorphic hodge structure means twisted by endomorphism} that $f$ is equivalent to $\id_T\colon T\simeq T_\phi$, where $\phi\coloneqq i_{T,\Q}^{-1}\circ f^*_\Q \circ i_{{T'},\Q}\circ f_\Q$, as in the proof of Lemma \ref{lem: isomorphic hodge structure means twisted by endomorphism}. Therefore \eqref{eq: F H bijection} is surjective.

    For injectivity, suppose $\id_T\colon T\simeq T_\phi$ is equivalent to $\id_T\colon T\simeq T_\psi$ for some $\phi,\psi\in F(T)$. By assumption, the diagram 
    \begin{equation*}
        \xymatrix{
            T_\phi \ar[d]_{\phi} \ar[r]^{\id_T}& T_\psi \ar[d]^{\psi} \\
            T_{\phi}^* & T_{\psi}^* \ar[l]^{\id_{T}^*}
        }
    \end{equation*}
    commutes, hence we have $\phi = \psi$, as required.

    Finally, to check the $\Aut(T)$-equivariance, let $h\in \Aut(T)$ be a Hodge automorphism of $T$ and let $\phi\in F(T)$. Note that we have the commutative diagram 
    \begin{equation}\label{eq: orbits are hodge isometric}
        \xymatrix{
        T_{h*\phi} \ar[r]^h \ar[d]_{(h*\phi)} & T_\phi \ar[d]^{\phi}\\
        T_{h*\phi}^*& T_\phi^*. \ar[l]^{h^*}
        }
    \end{equation}
    This shows that $\id_T\colon T\simeq T_{h*\phi}$ is equivalent to $h\colon T\simeq T_\phi$, hence \eqref{eq: F H bijection} is equivariant.
\end{proof}

\begin{corollary}\label{cor: main technical contribution}
    Let $T$ be an integral, irreducible Hodge lattice of K3-type. Let $\phi,\psi\in F(T)$. Then $T_\phi$ is Hodge isometric to $T_\psi$ if and only if there is a Hodge automorphism $h\in \Aut(T)$ such that $\phi = h*\psi$.
    In particular, we have a commutative diagram 
    \begin{equation*}\label{eq:main-maps}
    \xymatrix{
    F(T) \ar[r]^\simeq \ar[d]& H(T) \ar[d] & \\
    F(T)/\Aut(T)\ar[r]^\simeq& H(T)/\Aut(T) \ar[r]^(.65)\simeq & I(T).
    }
\end{equation*}    
\end{corollary}
\begin{proof}
    This follows immediately from Proposition \ref{prop: F H bijection} combined with Remark \ref{rem: quotient}.
\end{proof}

The main application of Proposition \ref{prop: F H bijection} is the following, which clarifies the relationship between $T$-equivalence and $D$-equivalence for sufficiently general K3 surfaces:

\begin{proposition}\label{prop: T general then T implies D}
    Let $X$ be a K3 surface with $\rho\neq 18$ and such that $\End(T(X))\simeq \Z$ (see Lemma \ref{lem: very general endomorphisms isometries}). Let $Y$ be a K3 surface which is $T$-equivalent to $X$ and which satisfies $\disc(T(X)) = \disc(T(Y))$. Then $X$ and $Y$ are $D$-equivalent. If $\rho = 18$, then $T(Y)$ is Hodge isometric to either $T(X)$ or $T(X)_{-1}$.
\end{proposition}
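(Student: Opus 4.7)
The plan is to apply the main results of this section to convert the $T$-equivalence between $X$ and $Y$ into a Hodge-lattice twist of $T(X)$ by a single rational parameter, and then to pin down that parameter using the discriminant and the signature.

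First, by Lemma \ref{lem: very general endomorphisms isometries} the hypothesis $\End(T(X))\simeq \Z$ is equivalent to $\End(T(X)_\Q)\simeq \Q$, so in particular $K(T(X)) = \Q$. Choose an isomorphism of Hodge structures $f\colon T(X)\simeq T(Y)$ coming from the $T$-equivalence. Applying Lemma \ref{lem: isomorphic hodge structure means twisted by endomorphism} (equivalently, Proposition \ref{prop: F H bijection}), $f$ produces some $\phi\in F(T(X))$ together with a Hodge isometry $T(Y)\simeq T(X)_\phi$. Since $F(T(X))\subset K(T(X))^\times = \Q^\times$, the endomorphism $\phi$ is multiplication by a single rational scalar $q\in \Q^\times$.

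Next, I will invoke Lemma \ref{lem: discriminants of twisted hodge lattices}. With $E = \Q$, $N(\phi) = q$, and $m=\dim_\Q T(X)_\Q = 22-\rho$, the formula specialises to $\disc(T(X)_\phi)=q^{22-\rho}\disc(T(X))$. Combined with the hypothesis $\disc(T(Y))=\disc(T(X))$, this forces $|q|^{22-\rho}=1$, hence $q=\pm 1$. In particular $\phi$ is an integer scalar, so the condition $\phi(T(X))\subset T(X)^*$ is automatic from $T(X)\subset T(X)^*$.

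The proof concludes with a signature comparison. If $q=1$, then $T(Y)$ is Hodge isometric to $T(X)$, and the Derived Torelli Theorem for K3 surfaces of Mukai and Orlov delivers the $D$-equivalence of $X$ and $Y$. If $q=-1$, then $T(Y)$ is Hodge isometric to $T(X)_{-1}=T(X)(-1)$, whose signature $(20-\rho,2)$ must agree with the signature $(2,20-\rho)$ of $T(Y)$; this happens if and only if $\rho=18$. Hence the assumption $\rho\neq 18$ rules out $q=-1$ and yields the $D$-equivalence, while for $\rho=18$ both signs remain possible and give the two alternatives in the second assertion. I do not foresee a genuine obstacle: the nontrivial content is packaged in Proposition \ref{prop: F H bijection} and in the discriminant formula of Lemma \ref{lem: discriminants of twisted hodge lattices}, and the scalar-endomorphism hypothesis reduces the analysis of $\phi$ to the single sign $q=\pm 1$.
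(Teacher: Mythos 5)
Your proof is correct and follows essentially the same route as the paper: apply Proposition \ref{prop: F H bijection} to reduce to a scalar twist $q\in\Q^\times$, use Lemma \ref{lem: discriminants of twisted hodge lattices} together with the equal discriminants to force $q=\pm 1$, and rule out $q=-1$ by comparing the signature of $T(X)_{-1}=T(X)(-1)$ with that of $T(Y)$ when $\rho\neq 18$. The only difference is that you make explicit the final appeal to the Derived Torelli Theorem, which the paper leaves implicit.
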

\begin{proof}
    Fix an isomorphism of Hodge structures $f\colon T(X)\simeq T(Y)$. By Proposition \ref{prop: F H bijection}, there is a rational endomorphism $\phi\colon T(X)_\Q\simeq T(X)_\Q$ such that $T(Y)$ is Hodge isometric to $T(X)_{\phi}$. By our assumption on $X$, $\phi$ is given by multiplication by some rational number $q\in \Q$. In particular, we have $$\disc(T(X)) = \disc(T(Y)) = \disc(T(X)_\phi) = q^{\rk T(X)}\cdot \disc(T(X)),$$ where the third equality follows from Lemma \ref{lem: discriminants of twisted hodge lattices}. This shows that $q = \pm 1$, hence $T(Y)$ is Hodge isometric to $T(X)$ or to $T(X)_{-1}$. If we assume $\rho\neq 18$, the signature of $T(X)$ is $(2,\rk(T(X))-2) \neq (2,2)$. Therefore, the signature of $T(X)_{-1}$ is not equal to that of $T(Y)$, hence $q = 1$, and $T(X)$ is Hodge isometric to $T(Y)$.
\end{proof}
\begin{remark}
    It is possible to find a K3 surface $X$ of Picard rank 18, and a K3 surface $Y$ which is $T$-equivalent to $X$ and which satisfies $\disc(T(X))=\disc(T(Y))$, yet $T(X)$ and $T(Y)$ are not Hodge isometric. For example, let $X$ be a K3 surface whose transcendental lattice is isometric to the lattice $$L \coloneqq A_2 \oplus \langle -2 \rangle \oplus \langle -2 \rangle,$$ where $A_2$ is the positive-definite lattice of rank $2$ whose matrix is given by 
    \[
        \left(
            \begin{matrix}
                2 & -1 \\ -1 & 2
            \end{matrix}
        \right).
    \]
    We claim that $L \not \simeq L(-1)$. Firstly, it is not difficult to check that $A_{A_2}(-1)\not\simeq A_{A_2}$. Since $A_L \simeq A_{A_2} \oplus A_{\langle -2 \rangle}^{\oplus 2},$ the $3$-primary part of $A_L$ is precisely $A_L^{(3)} \simeq A_{A_2}$. If we assume that $L \simeq L(-1)$, then it follows that $A_{A_2}\simeq A_{L}^{(3)} \simeq A_{L}(-1)^{(3)} \simeq A_{A_2}(-1)$, a contradiction. 

    Since there exists a primitive embedding $T(X)(-1) \hookrightarrow \klat$ by \cite[Corollary 1.12.3]{Nik80} (see also Example \ref{ex:Shioda--Inose counterexample}(ii)), it follows from the surjectivity of the period map \cite{Tod80} that there exists a K3 surface $Y$ with a Hodge isometry $T(Y) \simeq T(X)(-1)$. The K3 surface $Y$ is clearly $T$-equivalent to $X$, and we also have $\disc(T(Y)) = \disc(L(-1)) = \disc(L) = \disc(T(X))$, but $T(Y)$ is not isometric to $T(X)$, let alone Hodge isometric to $T(X)$. 

    Of course, we can replace $L$ by any even lattice $N$ of rank $4$ and signature $(2,2)$ to produce more examples of this phenomenon, provided that we have $N\not \simeq N(-1)$. 
\end{remark}
\section{Hodge realisation and discriminants}
\label{sec: L equivalence and D equivalence}
Following Efimov \cite{Efi18}, we denote by $\operatorname{HS}_{\Z,2}$ the additive category of integral, polarisable Hodge structures of weight 2. The \textit{Grothendieck group of polarisable Hodge structures of weight 2}, denoted $\gghs$, is defined to be the free abelian group generated by isomorphism classes of objects in $\operatorname{HS}_{\Z,2}$, modulo the relations of the form
\[
[H_1\oplus H_2] = [H_1] + [H_2] \qquad \text{for } H_1,H_2\in \operatorname{HS}_{\Z,2}.
\]

There is a group homomorphism, called the \textit{Hodge realisation map} $$\fonction{\operatorname{Hdg}_\Z}{\gravc}{\gghs}{[X]}{[H^2(X,\Z)]}$$ that factors through the localisation $\gravc[\LL^{-1}]$. In other words, for any two $L$-equivalent complex, smooth, projective varieties $X$, $Y$, we have $$[H^2(X,\Z)] = [H^2(Y,\Z)] \in \gghs.$$

For $H\in \operatorname{HS}_{\Z,2}$ an integral, polarisable Hodge structure of weight 2, we denote $\NS(H)\coloneqq H^{1,1}\cap H$.
We denote by $T(H)$ the minimal integral sub-Hodge structure of $H$ for which $H^{2,0}\subset T(H)_\CC$, and we call $T(H)$ the \textit{transcendental sub-Hodge structure} of $H$. Explicitly, $T(H)$ can be constructed as follows. The rational Hodge structure $H_\Q$ can be written as a direct sum of simple rational Hodge structures $H_\Q \simeq \bigoplus_{i = 1}^n V_i$, for example by \cite[\S II.7, Lemma 7.26]{Voi02}. Then we define $T(H)_\Q$ to be the direct sum of those $V_i$ which intersect $H^{2,0}$ non-trivially, and then we define the transcendental sub-Hodge structure of $H$ to be $T(H)\coloneqq T(H)_\Q\cap H$.  
For any two Hodge structures $H_1,H_2\in \operatorname{HS}_{\Z,2}$, it follows from the construction of $T(H)$ that we have $T(H_1\oplus H_2) = T(H_1)\oplus T(H_2)$.
Similarly, we have $\NS(H_1\oplus H_2)  = \NS(H_1) \oplus \NS(H_2)$.

Finally, we denote the \textit{gluing group} of $H$ by $$G(H)\coloneqq \frac{H}{\NS(H)\oplus T(H)}.$$
By the above discussion, it follows that we have $G(H_1\oplus H_2) \simeq G(H_1)\oplus G(H_2)$ for any $H_1,H_2\in \operatorname{HS}_{\Z,2}$. Therefore, we obtain a group homomorphism $$\fonction{D}{\gghs}{\Q^\times}{[H]}{|G(H)|.}$$

If $X$ is a K3 surface, then $H^2(X,\Z)$ is unimodular, hence the natural map $$H^2(X,\Z)/\big(\NS(X)\oplus T(X)\big) \simeq A_{T(X)}$$ is an isomorphism, where $A_{T(X)}= T(X)^*/T(X)$ is the discriminant group of $T$ \cite[Proposition 1.5.1]{Nik80}.

As an immediate consequence of the above discussion, we obtain:

\begin{lemma}\label{lem:L equivalent same discriminant}
    Let $X$ and $Y$ be $L$-equivalent K3 surfaces. Then $\disc(T(X)) = \disc(T(Y))$.
\end{lemma}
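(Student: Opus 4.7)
The plan is to chain together the two group homomorphisms introduced in the paragraphs immediately preceding the lemma: the Hodge realization $\operatorname{Hdg}_\Z\colon \gravc \to \gghs$ and the discriminant-group map $D\colon \gghs \to \Q$, $[H]\mapsto |D(H)|$.

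The first step is to leverage the $L$-equivalence of $X$ and $Y$: by definition, there exists $n\geq 0$ with $\LL^n\cdot([X]-[Y]) = 0$ in $\gravc$. Since $\operatorname{Hdg}_\Z$ factors through the localisation $\gravc[\LL^{-1}]$, the class $\LL$ becomes invertible after applying $\operatorname{Hdg}_\Z$, and I would conclude $[H^2(X,\Z)] = [H^2(Y,\Z)]$ in $\gghs$. Applying $D$ then yields the numerical equality $|D(H^2(X,\Z))| = |D(H^2(Y,\Z))|$.

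The second step is to identify $|D(H^2(X,\Z))|$ with $\disc(T(X))$. Since $H^2(X,\Z)$ is unimodular for a K3 surface, the general fact \cite[Proposition 1.5.1]{Nik80} specialises to an isomorphism
\[ H^2(X,\Z)\big/\bigl(\NS(X)\oplus T(X)\bigr) \simeq A_{T(X)}, \]
which is precisely the content recalled just before the lemma. This gives $|D(H^2(X,\Z))| = |A_{T(X)}| = \disc(T(X))$, and identically $|D(H^2(Y,\Z))| = \disc(T(Y))$, so the two discriminants agree.

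There is no substantive obstacle here: the argument is entirely formal once both homomorphisms have been set up, and the only non-trivial input is the cited result of Nikulin that identifies the ``defect lattice'' $H^2/(\NS\oplus T)$ with the discriminant group in the unimodular case.
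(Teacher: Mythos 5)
Your proposal is correct and follows exactly the paper's argument: the paper likewise chains the homomorphisms $\operatorname{Hdg}_\Z$ and $D$, using that $\operatorname{Hdg}_\Z$ factors through $\gravc[\LL^{-1}]$ to get $[H^2(X,\Z)]=[H^2(Y,\Z)]$ in $\gghs$, and then identifies $|D(H^2(X,\Z))|$ with $\disc(T(X))$ via the unimodularity of $H^2(X,\Z)$ and \cite[Proposition 1.5.1]{Nik80}. There is nothing to add.
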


Our main result is the following.

\begin{theorem} \label{thm: L equivalent implies D equivalent in general}
    Let $X$ and $Y$ be K3 surfaces of Picard rank $\rho\neq 18$. Assume $\End(T(X))\simeq \Z$. If $X$ and $Y$ are $L$-equivalent, then they are $D$-equivalent.
\end{theorem}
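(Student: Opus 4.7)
The proof is a direct synthesis of the ingredients assembled in the preceding sections. The plan is to deduce $T$-equivalence and equality of transcendental discriminants from $L$-equivalence, and then invoke Proposition \ref{prop: T general then T implies D} to upgrade $T$-equivalence to $D$-equivalence.

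First, I would apply Lemma \ref{lem: L implies T for very general K3s} (Efimov's Lemma 3.7): since $X$ and $Y$ are $L$-equivalent hyperkähler manifolds and $\End(T(X))\simeq \Z$, they are $T$-equivalent, i.e.\ there is an isomorphism of integral Hodge structures $T(X)\simeq T(Y)$. Next, by Lemma \ref{lem:L equivalent same discriminant}, $L$-equivalent K3 surfaces satisfy $\disc(T(X))=\disc(T(Y))$ (this uses unimodularity of $H^2(X,\Z)$, which is why the argument works cleanly for K3 surfaces as opposed to higher \Kn-type manifolds, where the same step would require additional care).

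With these two facts in hand, the hypotheses of Proposition \ref{prop: T general then T implies D} are satisfied: $X$ has $\rho\neq 18$, $\End(T(X))\simeq \Z$, $Y$ is $T$-equivalent to $X$, and the transcendental discriminants agree. That proposition then produces a Hodge \emph{isometry} $T(X)\simeq T(Y)$ (here the $\rho\neq 18$ hypothesis rules out the only troublesome case in which multiplication by $-1$ on $T(X)_\Q$ could produce a non-isometric twist of the same signature). The Derived Torelli Theorem of Mukai--Orlov then yields a $D$-equivalence $\Db(X)\simeq \Db(Y)$, completing the argument.

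The substantive content is already contained in Proposition \ref{prop: F H bijection} and Lemma \ref{lem: isomorphic hodge structure means twisted by endomorphism}, which reduce the classification of Hodge lattices sharing a common underlying Hodge structure to the single rational parameter $\phi\in F(T)$; the hypothesis $\End(T(X))\simeq \Z$ then collapses this parameter to multiplication by a rational number, and Lemma \ref{lem: discriminants of twisted hodge lattices} combined with the discriminant equality pins this number down to $\pm 1$. Thus the main obstacle has already been overcome in Section 3, and the final theorem is simply an assembly of Lemmas \ref{lem: L implies T for very general K3s}, \ref{lem:L equivalent same discriminant}, Proposition \ref{prop: T general then T implies D}, and the Derived Torelli Theorem.
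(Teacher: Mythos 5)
Your proposal is correct and follows exactly the same route as the paper: Lemma \ref{lem: L implies T for very general K3s} gives $T$-equivalence, Lemma \ref{lem:L equivalent same discriminant} gives equality of discriminants, and Proposition \ref{prop: T general then T implies D} then yields $D$-equivalence. The additional commentary on where the real work lies (Proposition \ref{prop: F H bijection}, Lemma \ref{lem: discriminants of twisted hodge lattices}, and the Derived Torelli Theorem inside Proposition \ref{prop: T general then T implies D}) is accurate but not needed for the deduction itself.
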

\begin{proof}
    By \cite[Lemma 3.7]{Efi18}, $X$ and $Y$ are $T$-equivalent. Moreover, we have $\disc(T(X)) = \disc(T(Y))$ by Lemma \ref{lem:L equivalent same discriminant}. It follows from Proposition \ref{prop: T general then T implies D} that $X$ and $Y$ are $D$-equivalent, as required.
\end{proof}

Theorem \ref{thm: L equivalent implies D equivalent in general} applies to very general K3 surfaces of Picard rank $\rho\leq 17$ by Lemma \ref{lem: very general endomorphisms isometries}.

\section{Applications and challenges}
\label{sec: applications and challenges}
We now apply the results of the previous sections to study specific examples of K3 surfaces. Following this, we study higher-dimensional hyperk\"ahler manifolds of \Kn-type.

\begin{corollary}
    Let $X$ be a K3 surface of Picard rank 2 which admits an elliptic fibration $X\to \PP^1$ of multisection index $5$. Assume that $\End(T(X))\simeq \Z$. Let $Y$ be another K3 surface. Then $X$ and $Y$ are $L$-equivalent if and only if they are $D$-equivalent.
\end{corollary}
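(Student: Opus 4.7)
The forward direction ``$L$-equivalent $\implies$ $D$-equivalent'' is a direct application of Theorem \ref{thm: L equivalent implies D equivalent in general}, whose hypotheses $\rho(X)\neq 18$ and $\End(T(X))\simeq \Z$ are satisfied by assumption.

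For the converse ``$D$-equivalent $\implies$ $L$-equivalent'' I would proceed as follows. By the Derived Torelli Theorem, any K3 surface $Y$ that is $D$-equivalent to $X$ is isomorphic to a fine, two-dimensional moduli space of stable sheaves on $X$, and in particular there are only finitely many isomorphism classes of such $Y$. For a Picard rank $2$ K3 surface admitting an elliptic fibration $\pi\colon X\to \PP^1$, this finite set of Fourier--Mukai partners admits a concrete description in terms of the Jacobian fibration $J\coloneqq \Jac(X/\PP^1)$: there exists a class $\alpha\in \Sha(J/\PP^1)$ of order equal to the multisection index $5$, realising $X$ as the twist $J^\alpha$, and every Fourier--Mukai partner of $X$ is isomorphic to a twist $J^\beta$ for some $\beta\in \langle \alpha\rangle$. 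Since $5$ is prime, the action of $(\Z/5\Z)^\times$ by automorphisms of $\langle\alpha\rangle$ is transitive on the non-zero elements, so the problem reduces to establishing the $L$-equivalence of $X$ with each $J^\beta$ for $\beta$ a generator of $\langle\alpha\rangle$.

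For this last step I would invoke the construction of Shinder--Zhang \cite{SZ20}, which for elliptic K3 surfaces of multisection index $5$ produces an explicit motivic identity $\LL^n\cdot([X]-[J^\beta])=0$ in $\gravc$ by realising $X$ and $J^\beta$ as generic fibres of relative Severi--Brauer varieties over the common base $\PP^1$ whose generic Brauer classes are related by the natural $(\Z/5\Z)^\times$-action, and by comparing the classes of these bundles in the Grothendieck ring.

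The principal obstacle is precisely the reverse direction: in contrast to ``$L\Rightarrow D$'', which follows abstractly from the Hodge-theoretic machinery developed earlier in the paper, ``$D\Rightarrow L$'' is not known in general and requires a concrete geometric construction of an $L$-equivalence from a Fourier--Mukai partnership. The choice of multisection index $5$ is essential here: it is precisely the setting in which the Severi--Brauer construction of \cite{SZ20} applies, and primality of $5$ is what ensures that a single such construction handles all Fourier--Mukai partners of $X$ simultaneously.
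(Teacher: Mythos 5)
Your proposal matches the paper's proof: the forward direction is the same application of Theorem \ref{thm: L equivalent implies D equivalent in general}, and the converse likewise reduces to the classification of the Fourier--Mukai partners of a Picard-rank-2 elliptic K3 surface of multisection index $5$ as (iterated) Jacobians/Tate--Shafarevich twists, followed by the known $L$-equivalence of such a surface with its Jacobians from \cite[Theorem 3.2]{SZ20}. The only cosmetic difference is that the paper quotes the partner classification directly from \cite[Corollary 5.13(i)]{MS24} in the form $X\simeq \Jac^k(Y)$, whereas you phrase it via twists of the common Jacobian; your gloss on the internal mechanism of \cite{SZ20} is not load-bearing.
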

\begin{proof}
    By \cite[Corollary 5.13(i)]{MS24}, every K3 surface $Y$ such that $\Db(X)\simeq \Db(Y)$ admits an elliptic fibration $Y\to \PP^1$ such that $X\simeq \Jac^k(Y)$ for some $k = 1,\ldots, 4$. By \cite[Theorem 3.2]{SZ20}, $X$ and $Y$ are then $L$-equivalent. Conversely, if $X$ and $Y$ are $L$-equivalent, it follows from Theorem \ref{thm: L equivalent implies D equivalent in general} that $X$ and $Y$ are $D$-equivalent.
\end{proof}

Finally, we turn our attention to higher-dimensional hyperk\"ahler manifolds.
If $X$ is a hyperk\"ahler manifold of \Kn-type for some $n\geq 2$, then $H^2(X,\Z)$ is not unimodular and the natural map $$H^2(X,\Z)/\big(\NS(X)\oplus T(X)\big)\hookrightarrow A_{T(X)}$$ is injective but not necessarily surjective. This means that our proof of Lemma \ref{lem:L equivalent same discriminant} does not generalise to this setting. Nevertheless, Conjecture \ref{conj: L implies D} predicts that Lemma \ref{lem:L equivalent same discriminant} also holds for higher-dimensional hyperk\"ahler manifolds of \Kn-type. In fact, proving the analogue of Lemma \ref{lem:L equivalent same discriminant} for hyperk\"ahler manifolds of \Kn-type would be a big step towards proving Conjecture \ref{conj: L implies D}. We refer to \cite{Mei25} for details.

\begin{theorem}\label{thm: HK fourfolds}
    Let $X$ be a hyperk\"ahler fourfold of \Ktwo-type, and suppose $\End(T(X))\simeq \Z$. Assume that $X$ has Picard rank 1, and let $H\in \NS(X)$ be an ample generator of $\NS(X)$ with $H^2 = 2g$. Let $Y$  be a hyperk\"ahler fourfold of \Ktwo-type which is $L$-equivalent to $X$ and such that $\disc(T(Y)) = \disc(T(X))$.
    \begin{enumerate}
        \item[i)]  Suppose $g\equiv 1\pmod 4$, or $8\mid d$, or $\dv(H)=2$. Then $Y$ is $D$-equivalent to $X$. 
        \item[ii)] If $g\not\equiv1\pmod 4$ and $8\nmid d$ and $\dv(H)\neq 2$, then $X$ and $Y$ are twisted derived equivalent.
    \end{enumerate}
\end{theorem}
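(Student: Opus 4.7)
My plan is to reduce the statement to the Derived Torelli-type results of \cite{KK24}, by first establishing a Hodge isometry between the transcendental lattices of $X$ and $Y$ using the machinery of Section 3, and then invoking \cite{KK24} to pass from this isometry to a (possibly twisted) derived equivalence.

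First, I would combine the hypotheses $X \sim_L Y$ and $\End(T(X)) \simeq \Z$ with Lemma \ref{lem: L implies T for very general K3s} to conclude that $X$ and $Y$ are $T$-equivalent. Note that the implication $i)\Rightarrow ii)$ in Lemma \ref{lem: very general endomorphisms isometries} is purely lattice-theoretic and applies verbatim to transcendental lattices of hyperk\"ahler manifolds, so we additionally have $\End(T(X)_\Q) \simeq \Q$. Fixing any Hodge isomorphism $T(X) \simeq T(Y)$, Proposition \ref{prop: F H bijection} together with Lemma \ref{lem: isomorphic hodge structure means twisted by endomorphism} then produces some $\phi \in F(T(X))$, necessarily multiplication by a scalar $q\in \Q^\times$, for which $T(Y)$ is Hodge isometric to $T(X)_\phi$.

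Next, I would combine the assumption $\disc(T(Y)) = \disc(T(X))$ with Lemma \ref{lem: discriminants of twisted hodge lattices} to obtain $q^{\rk T(X)} = 1$, forcing $q = \pm 1$. A signature argument eliminates $q = -1$: since $X$ is a hyperk\"ahler fourfold of \Ktwo-type of Picard rank $1$, the transcendental lattice $T(X)$ has signature $(2,20)$, while $T(X)_{-1}$ has signature $(20,2)$, which cannot agree with that of $T(Y)$. Thus $T(Y)$ is Hodge isometric to $T(X)$.

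Finally, with a Hodge isometry $T(X)\simeq T(Y)$ in hand, I would invoke the results of \cite{KK24} on derived equivalences of hyperk\"ahler fourfolds of \Ktwo-type with Picard rank $1$ and ample generator $H$ satisfying $H^2 = 2g$. These translate the Hodge isometry into a genuine equivalence $\Db(X)\simeq \Db(Y)$ when one of the conditions $g\equiv 1\pmod 4$, $8\mid d$, or $\dv(H)=2$ holds, yielding (i); in the remaining numerical regime one obtains only a twisted derived equivalence, yielding (ii). The main obstacle I foresee is purely bookkeeping: extracting from \cite{KK24} the precise statement whose numerical hypotheses match the case split in (i) versus (ii), and checking that my lattice-theoretic input (a Hodge isometry with the same discriminant) is exactly what their result consumes.
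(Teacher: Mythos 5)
Your proposal is correct and follows essentially the same route as the paper: the paper's proof simply says to argue "analogously to the proof of Proposition \ref{prop: T general then T implies D}" to obtain a Hodge isometry $T(X)\simeq T(Y)$ (via Lemma \ref{lem: L implies T for very general K3s}, Proposition \ref{prop: F H bijection}, the discriminant hypothesis, and the signature of $T(X)$, which is indeed $(2,20)$ here) and then cites \cite[Theorem 5.1]{KK24}. You have merely spelled out the details that the paper leaves implicit, including correctly using the stated hypothesis $\disc(T(X))=\disc(T(Y))$ rather than trying to deduce it from $L$-equivalence (which fails here since $H^2$ is not unimodular for \Ktwo-type).
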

\begin{proof}
    Analogously to the proof of Proposition \ref{prop: T general then T implies D}, one may use Proposition \ref{prop: F H bijection} to show that there is a Hodge isometry $T(X)\simeq T(Y)$. The result now follows from \cite[Theorem 5.1]{KK24}.
\end{proof}

\begin{theorem} \label{thm: unimodular moduli}
    Let $S$ be a K3 surface with $\rho\neq 18$, such that $\NS(S)$ is unimodular, and such that $\End(T(S))\simeq \Z$. Let $M$ be a smooth moduli space of sheaves on $S$ of dimension $2n\geq 2$. Let $X$ be a hyperk\"ahler manifold of \Kn-type that is $L$-equivalent to $M$ and such that $\disc(T(X)) = \disc(T(M))$. Then $X$ is birational to $M$. In particular, $X$ and $M$ are $D$-equivalent.
\end{theorem}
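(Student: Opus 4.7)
The plan is to mirror the proofs of Proposition \ref{prop: T general then T implies D} and Theorem \ref{thm: HK fourfolds}: extract a Hodge isometry $T(X)\simeq T(M)$ from the $L$-equivalence and discriminant hypotheses using Proposition \ref{prop: F H bijection}, and then invoke the main theorem of \cite{MM24} to upgrade this Hodge isometry to a birational equivalence, which for hyperk\"ahler manifolds of $\Kn$-type implies $D$-equivalence.

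First I would use the Mukai--O'Grady--Yoshioka--Markman Hodge isometry $H^2(M,\Z)\simeq v^\perp\subset \widetilde{H}(S,\Z)$ coming from the (quasi-)universal sheaf on $M\times S$. Restricted to the transcendental part, this gives a Hodge isometry $T(M)\simeq T(S)$. Combined with the hypotheses on $S$, this transfers the relevant properties to $M$: one has $\End(T(M)_\Q)\simeq \Q$ and $\rk T(M)=\rk T(S)\neq 4$ (since $\rho(S)\neq 18$).

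Next, since $X$ and $M$ are $L$-equivalent and $\End(T(M))\simeq \Z$, Lemma \ref{lem: L implies T for very general K3s} gives an isomorphism of Hodge structures $T(X)\simeq T(M)$. Applying Proposition \ref{prop: F H bijection} produces $\phi\in F(T(M))$ with a Hodge isometry $T(X)\simeq T(M)_\phi$. Since $\End(T(M)_\Q)\simeq \Q$, $\phi$ is scalar multiplication by some $q\in \Q$. The hypothesis $\disc T(X)=\disc T(M)$ combined with Lemma \ref{lem: discriminants of twisted hodge lattices} forces $q^{\rk T(M)}=1$, so $q=\pm 1$; and the signature argument used in Proposition \ref{prop: T general then T implies D} rules out $q=-1$, since $\rk T(M)\neq 4$ means $T(M)$ and $T(M)(-1)$ have distinct signatures. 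Hence $T(X)$ is Hodge-isometric to $T(M)$.

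The main obstacle is the final step. I would invoke the birationality criterion from \cite{MM24}, which, under the unimodularity assumption on $\NS(S)$, asserts that any hyperk\"ahler manifold of $\Kn$-type with transcendental lattice Hodge-isometric to $T(M)$ is birational to $M$. Since birational hyperk\"ahler manifolds are $D$-equivalent (a standard consequence of the Mukai flop/Fourier--Mukai-equivalence machinery), this finishes the proof. The reduction to a Hodge isometry is by now routine, so the substantive content of the argument is identifying the precise form of the birationality statement in \cite{MM24} and checking that our hypotheses match its assumptions.
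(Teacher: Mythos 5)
Your proposal follows essentially the same route as the paper: Proposition \ref{prop: F H bijection} plus the discriminant and signature arguments yield a Hodge isometry $T(X)\simeq T(M)$, which is then upgraded to a birational map and hence a derived equivalence. The only difference is that the paper splits your final step in two, first citing Addington to show that the Hodge isometry makes $X$ birational to \emph{some} moduli space of sheaves on $S$, and only then using the unimodularity of $\NS(S)$ via \cite[Proposition 5.11]{MM24} to identify that moduli space with $M$ up to birational equivalence.
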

\begin{proof}
    We may again use Proposition \ref{prop: F H bijection} to conclude that there is a Hodge isometry $T(X)\simeq T(M)$. In particular, $X$ is birational to a moduli space of sheaves on $S$ by \cite[Proposition 4]{Add16}. Therefore, by \cite[Proposition 5.11]{MM24}, it follows from the unimodularity of $\NS(S)$ that $X$ and $M$ are birational. Thus $X$ and $M$ are $D$-equivalent by \cite[Theorem 4.5.1]{Hal21}.
\end{proof}

\printbibliography
\end{document}